\documentclass[10pt]{amsart}
\usepackage{latexsym,amssymb}
\usepackage{tikz}

\theoremstyle{plain}
  \newtheorem{theorem}{Theorem}[section]
  \newtheorem{proposition}[theorem]{Proposition}
  
  \newtheorem{corollary}[theorem]{Corollary}
  
\theoremstyle{definition}
  \newtheorem{definition}[theorem]{Definition}
  \newtheorem{example}[theorem]{Example}

 \theoremstyle{remark}
  \newtheorem{remark}[theorem]{Remark}

\newcommand\qtbin[2]{\left[\begin{matrix} #1 \\ #2 \end{matrix} \right]}

\newcommand\Hilb{\operatorname{Hilb}}
\newcommand\Hom{\operatorname{Hom}}

\newcommand\inv{\operatorname{inv}}
\newcommand\maj{\operatorname{maj}}

\newcommand\wt{\operatorname{wt}}

\newcommand\Des{\operatorname{Des}}
\newcommand\specialize{\operatorname{sp}}

\newcommand\Symm{\mathfrak{S}}

\newcommand\xx{{\mathbf{x}}}

\newcommand\FFF{\mathbf{F}}

\newcommand\LLL{{\mathcal L}}

\newcommand\FQSym{{\mathcal{FQS}ym}}
\newcommand\PBT{{\mathcal{PBT}}}

\newcommand\ZZ{{\mathbb Z}}
\newcommand\FF{{\mathbb F}}
\newcommand\QQ{{\mathbb Q}}
\newcommand\NN{{\mathbb N}}

\usepackage{ifthen}
\newboolean{draft}
\setboolean{draft}{true}
\ifdraft
\newcommand{\TODO}[2][To do: ]{\textcolor{red}{\textbf{#1#2}}}
\else
\newcommand{\TODO}[2][]{}
\fi

\numberwithin{equation}{section}

\title[A multivariate ``inv'' hook formula for forests]
{A Multivariate ``inv'' hook formula for forests}

\dedicatory{To Dennis Stanton on his 60th birthday}


\author{Florent Hivert}
\email{florent.hivert@univ-rouen.fr}
\address{LITIS, Universit\'e de Rouen\\
Avenue de l'Universit'e\\
76801 Saint \'Etienne du Rouvray\\
France}

\author{Victor Reiner}
\email{reiner@math.umn.edu}
\address{School of Mathematics\\
University of  Minnesota\\
Minneapolis, MN 55455\\
USA}

\thanks{}
\thanks{First author partially supported by grant ANR-06-BLAN-0380.
Second author partially supported by NSF grant DMS-0601010.
The second author also thanks A. Lascoux, J.-C. Novelli, and J.-Y. Thibon
of the Institut Gaspard Monge at the University of Marne-la-Vall\'ee
for their hospitality during part of this work.}

\subjclass{05A15, 05A10}

\keywords{hook formula, forests, moulds, binary search, 
free quasisymmetric functions, Loday-Ronco algebra}

\begin{document}

\begin{abstract}
Bj\"orner and Wachs provided two $q$-generalizations of 
Knuth's hook formula counting linear extensions of forests: one
involving the major index statistic, and one involving the inversion
number statistic.  We prove a multivariate generalization of
their inversion number result, motivated by specializations
related to the modular invariant theory of finite general linear groups.
\end{abstract}

\maketitle


\section{Introduction}
\label{intro-section}

This paper concerns formulas counting linear extensions
of partial orders $P$ on the set $\{1,2,\ldots,n\}$ which are {\it forests}, in the
sense that every element covers at most one other element.
Recall that a permutation $w$ is a {\it linear extension} 
of the poset $P$ if the linear order
$w_1 <_w \ldots <_w w_n$ has the property that $i <_P j$ implies
$i <_w j$.  Denote by $\LLL(P)$ the set of all linear extensions
of $P$.  Knuth observed the following.
\vskip.1in
\noindent
{\bf Theorem.} (Knuth \cite[\S 5.1.4, Exer. 20]{Knuth})
{\it
For any forest poset $P$ on $\{1,2,\ldots,n\}$, one has
$$
| \LLL(P) | = \frac{n!}{\prod_{i=1}^n h_i}
$$
where $h_i:=|P_{\geq i}|$ is the cardinality of the subtree
$P_{\geq i}$ rooted at $i$.
}
\vskip.1in
\noindent
Bj\"orner and Wachs \cite{BjornerWachs} later gave two interesting
$q$-generalizations of Knuth's result, both counting linear extensions according
to certain statistics: the {\it inversion number} statistic $\inv$,
and the {\it major index} statistic $\maj$.
The following theorem rephrases a special case of the
first of these results, relating to $\inv$; see
Remark~\ref{maj-formula-remark} below for their second generalization.

Say that a forest poset $P$ is {\it recursively labelled} if the label set on
each subtree $P_{\geq i}$ forms an interval in the integers, that is, $P_{\geq
  i}=\{a,a+1,\ldots,b-1,b\}$ for some integers $a=:\min(P_{\geq i})$ and
$b=:\max(P_{\geq i})$.  Define the {\it inversion number} $\inv(P)$ to be the
number of pairs $i <_\ZZ j$ for which $i >_P j$. For example, the following
picture shows the Hasse diagram of a recursively labelled forest $P$
on $\{1,2,\ldots,10\}$.
\[
\begin{tikzpicture}[scale=0.7, inner sep=0.3mm
]
  \node[circle, draw] {2} [grow'=up]
    child {node[circle, draw] {1}}
    child {node[circle, draw] {3}
      child {node[circle, draw] {4}}
      child {node[circle, draw] {5}}
    };
  \node[circle, draw] at (4,0) {7} [grow'=up]
    child {node[circle, draw] {6}}
    child {node[circle, draw] {8}}
    child {node[circle, draw] {10}
      child {node[circle, draw] {9}}
    };
\end{tikzpicture}
\]
Here one has $P_{\geq 3} = \{3,4,5\}, P_{\geq 7} = \{6,7,8,9,10\}$, and
$$
\inv(P)=3=|\{(1,2), (6,7),(9,10)\}|.
$$

Lastly, define the
$q$-analogues
$$
\begin{aligned}[]
[n]_q&:=1+q+q^2+\cdots+q^{n-1},\\
[n]!_q &:=[n]_q [n-1]_q [n-2]_q \cdots [2]_q [1]_q.
\end{aligned}
$$
\vskip.1in
\noindent
{\bf Theorem.} (Bj\"orner and Wachs \cite[Thm. 1.1]{BjornerWachs})
~\\
{\it
Any recursively labelled forest $P$ on $\{1,2,\ldots,n\}$ has
\begin{equation}
\label{Bjorner-Wachs-equation}
\sum_{w \in \LLL(P) } q^{\inv(w)} = q^{\inv(P)} \frac{[n]!_q}{\prod_{i=1}^n [h_i]_q}.
\end{equation}
}
\vskip.1in

Our goal is a multivariate generalization, Theorem~\ref{main-theorem}
below.  It is an identity within the field of rational functions
$\QQ(\xx):=\QQ(x_1,x_2,x_3,\ldots)$ in a sequence of indeterminates 
$x_1,x_2,x_3,\ldots$, related by a map $F$ sending $x_i \mapsto x_{i+1}$
that we call the {\it Frobenius map}.  
We introduce the following multivariate analogues of 
the positive integers $n$ and the factorial $n!$:
\begin{align}
  \label{n-definition}
[1]  :=&x_1\\\notag
[n]  :=&\ [1]+F[1]+F^2[1]+\cdots +F^{n-1}[1]\\\notag
      =&\ x_1 + x_2 + \cdots + x_n\\[3mm]
  \label{n!-definition}
[n]! :=&\ [n] \cdot F([n-1]) \cdot F^2([n-2]) \cdots F^{n-2}([2]) \cdot F^{n-1}([1])\\\notag
      =&\ [n] \cdot F\left( [n-1]! \right).
\end{align}
For example,
$$
[4]!=(x_1+x_2+x_3+x_4)(x_2+x_3+x_4)(x_3+x_4)x_4.
$$
After defining in Section~\ref{permutation-section} a weight $\wt(w)$
lying in $\QQ(\xx)$ for each permutation $w$, we prove in
Section~\ref{proof-section} the following main result.

\begin{theorem}
\label{main-theorem}
Any recursively labelled forest $P$ on $\{1,2,\ldots,n\}$ has
$$
L(P)\ :=\ \sum_{w \in \LLL(P) } \wt(w)
\ =\ \frac{[n]!}{\prod_{i=1}^n F^{\min(P_{\geq i})-1}[h_i]}\,.
$$
\end{theorem}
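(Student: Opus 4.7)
The plan is to prove the theorem by induction on $n$, exploiting the recursive decomposition of a recursively labelled forest. The base case $n=1$ is trivial: both sides equal $1$.

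For the inductive step, the argument pivots on a \emph{shuffle lemma} describing how $\wt$ distributes over shuffles of linear extensions of disjoint interval-labelled posets. Informally, for disjoint posets $P_1, \ldots, P_k$ with interval label sets $[a_j, b_j]$ of sizes $h^{(j)}$, one expects
$$
\sum_{w \in \LLL(P_1 \sqcup \cdots \sqcup P_k)} \wt(w) \;=\; M \cdot \prod_{j=1}^k L(P_j),
$$
where $M$ is a ``multivariate multinomial coefficient'' of the form $F^{a_1-1}([h]!)/\prod_j F^{a_j-1}([h^{(j)}]!)$, with $h$ the span of the union of intervals. Under $x_i \mapsto q^{i-1}$, $M$ should specialize to the classical $q$-multinomial coefficient underlying Bj\"orner and Wachs's proof in \cite{BjornerWachs}.

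Granting the shuffle lemma, the inductive step splits into two cases. If $P$ is a forest consisting of several trees on $[n]$, the lemma applied to those trees combines with the inductive hypothesis $L(T^{(j)}) = F^{a_j-1}([h^{(j)}]!)/\prod_{i \in T^{(j)}} F^{\min(P_{\geq i})-1}[h_i]$ to produce the theorem, since the denominator $\prod_i F^{\min(P_{\geq i})-1}[h_i]$ is multiplicative over the tree decomposition and the numerator $F^{a_j-1}([h^{(j)}]!)$ coming from each tree cancels against a corresponding factor of $M$. If $P$ is a single tree with root $r$ (the poset-maximum), then $r$ is the last letter of every linear extension, so $\LLL(P)$ is in bijection with $\LLL(P \setminus \{r\})$. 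Applying the shuffle lemma to the subtrees of $r$ and incorporating the ``root hook'' $F^{a-1}[h_r]$ corresponding to the index $i=r$ in the denominator product, together with the telescoping identity $[h]! = [h] \cdot F([h-1]!)$, recovers the claimed formula.

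The principal obstacle is the shuffle lemma itself. It is not a formal consequence of the definition of $\wt$ given in Section~\ref{permutation-section} but rather a precise multivariate refinement of the classical $q$-multinomial shuffle identity. The most natural approach is a further induction via a recursion on $\wt$ under insertion or removal of a specific letter (say, the minimum $1$), with the Frobenius map $F$ absorbing the resulting shifts in label range --- the iterated Frobenius shifts on the right of the theorem precisely reflecting this bookkeeping. Adapting the classical $q$-multinomial argument of \cite{BjornerWachs} to the multivariate setting is the technical heart of the proof.
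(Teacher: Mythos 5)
Your plan defers its entire difficulty to the ``shuffle lemma,'' and that lemma, \emph{as you state it}, is false. You assert it for arbitrary disjoint posets $P_1,\dots,P_k$ with interval label sets, but the paper's Section~\ref{algebra-section} contains exactly the counterexample: the map $\phi_{\inv}$ sending $\FFF_P \mapsto L(P)u^{(n)}$ is \emph{not} multiplicative, since $\phi_{\inv}(\FFF_1\cdot\FFF_{213}) \neq \phi_{\inv}(\FFF_1)\cdot\phi_{\inv}(\FFF_{213})$. Concretely, $\FFF_1\cdot\FFF_{213}=\FFF_{P_1\sqcup F^1(Q)}$ with $P_1=\{1\}$ and $Q$ the chain $2<_Q 1<_Q 3$; this $Q$ has interval label set $\{1,2,3\}$ and is even a forest, yet your predicted identity $L(P_1\sqcup F^1 Q)=\qtbin{4}{1}\,L(P_1)\cdot F L(Q)$ fails: the left side is $\frac{x_3}{x_2}+\frac{x_3(x_2+x_3)}{(x_1+x_2)x_2}+\frac{x_3(x_2+x_3)}{(x_1+x_2)x_1}+\frac{x_4(x_2+x_3)}{(x_1+x_2)x_1}$, the right side is $\frac{(x_1+x_2+x_3+x_4)x_3}{x_1x_2}$, and at $(x_1,x_2,x_3,x_4)=(1,2,3,4)$ these are $47/3$ and $15$. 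The failure is precisely because $Q$ is not \emph{recursively} labelled ($Q_{\geq 1}=\{1,3\}$ is not an interval). So your lemma is true only when the $P_j$ are themselves recursively labelled forests, and in that generality it is essentially equivalent to the theorem being proved: the paper obtains the product rule \eqref{hook-product-disconnected} for $L$ as a \emph{consequence} of Theorem~\ref{main-theorem}, not as an ingredient. Your sketched proof (induction on insertion/removal of the letter $1$) does not address this: the recursion defining $\wt$ strips the \emph{first letter} $w_1=k+1$ and splits the remaining letters by \emph{value} into $[1,k]$ and $[k+2,n]$, which mixes the components $P_j$ on the high side; any workable induction must invoke recursive labelling in an essential way, since the statement is false without it.

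There is also a convention error that breaks your single-tree case: in this paper forests are rooted at their \emph{minima} (every element covers at most one element, and the hooks are $h_i=|P_{\geq i}|$), so a tree has a unique minimal element which is the \emph{first} letter of every linear extension; your ``root $r$ (the poset-maximum), last letter of every linear extension'' generally does not exist, and in any case removing a last letter does not interact with the first-letter recursion defining $\wt$. For contrast, the paper's proof needs no multiplicative shuffle lemma at all. It inducts on $n$ plus the number of incomparable pairs of $P$. If some two subtrees $P_{\geq i},P_{\geq j}$ occupy contiguous intervals $[r+1,r+s],[r+s+1,r+s+t]$, it writes $\LLL(P)=\LLL(P_{i<j})\sqcup\LLL(P_{j<i})$, so that $L(P)=L(P_{i<j})+L(P_{j<i})$, and the hook side requires only the additive identity $F^r[s+t]=F^r[s]+F^{r+s}[t]$. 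Otherwise $P$ is a binary tree rooted at its minimum $k+1$, and the parabolic factorization $w=u\cdot a\cdot b$ built into Definition~\ref{definition-weight-permutation}, together with Proposition~\ref{binomial-as-sum}, yields $L(P)=\frac{F[k]!}{[k]!}F\qtbin{n-1}{k}\,L(P_1)\,F^{k+1}L(P_2)$ directly, and $H$ satisfies the same recurrence. If you want to salvage your plan, restrict the ``shuffle lemma'' to the two-factor statement for the two sides of the root minimum --- where it comes for free from the definition of $\wt$, which is the paper's Case 2 --- and replace the general multi-factor lemma by the paper's additive splitting argument.
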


\noindent
Section~\ref{corollaries-section} explains why
Theorem~\ref{main-theorem} becomes 
\eqref{Bjorner-Wachs-equation} upon applying the
following $q$-{\it specialization} map to both sides:

\begin{equation}
\label{q-specialization-map}
\begin{array}{rcl}
\QQ(x_1,x_2,\ldots) &\overset{\specialize_q}{\longrightarrow} &\QQ(q) \\
x_i &\longmapsto &q^{i-1} - q^i\,.
\end{array}
\end{equation}

\section{Invariant theory motivation}
\label{invariant-theory-section}

Aside from the Bj\"orner-Wachs {\it inv} formula,
a second motivation for Theorem~\ref{main-theorem} stems
from previous joint work in invariant theory with D. Stanton \cite{RStanton}.
The reader interested mainly in Theorem 1.1 and
its connection to the work of Bj\"orner and Wachs can safely
skip this explanation of the invariant-theoretic connection.

There are two special cases of Theorem~\ref{main-theorem} that turn out be equivalent to 
results from \cite{RStanton}, namely the cases where either
\begin{enumerate}
\item[(a)]
$P$ is a disjoint union of chains, 
each labelled by a contiguous interval of integers in increasing order \cite[Theorem 8.6]{RStanton},
or 
\item[(b)]
$P$ is a {\it hook} poset \cite[Eqn. (6.1) and (11.1)]{RStanton}, having 
$$ 
1 >_P 2 >_P \cdots >_P m <_P m+1 <_P \cdots <_P n-1 <_P n\,.
$$
\end{enumerate}

The story from \cite{RStanton} begins with $G:=GL_n(\FF_q)$ acting by linear substitutions
of variables on the polynomial algebra
$
S(q):=\FF_q[x_1,\ldots,x_n].
$
A well-known result of L.E. Dickson asserts that the $G$-invariant subalgebra 
$S(q)^G$ is again a polynomial algebra.

For each composition $\alpha=(\alpha_1,\ldots,\alpha_\ell)$
of $n$, one associates two families of $G$-representations $V(q)$ over $\FF_q$,
described below.  For both of these representations $V(q)$, the graded intertwiner spaces 
$$
M(q):=\Hom_{\FF_q G}(V(q),S(q))
$$
were shown in \cite{RStanton} to be {\it free} modules over $S(q)^G$,
and explicit formulas were given for the degrees of their
$S(q)^G$-basis elements, or equivalently for the {\it Hilbert
series} 
$$
\Hilb_q(t):=\Hilb\left(\,M(q)/S(q)^G_+ M(q)\,,\,\, t\,\right).
$$
These Hilbert series come from generating functions in $\QQ(\xx)$ by
applying the following {\it $(q,t)$-specialization map}
\begin{equation}
\label{q-t-specialization-map}
\begin{array}{rcl}
\QQ(x_1,x_2,\ldots) & \overset{\specialize_{q,t}}{\longrightarrow} &\QQ(q) \\
x_i & \longmapsto & t^{q^{i-1}} - t^{q^i}
\end{array}
\end{equation}
which is less drastic than the specialization in \eqref{q-specialization-map}.

The first family of $G$-representations $V(q)$ associated to $\alpha$
is the permutation module for $G$ acting on $\alpha$-flags of $\FF_q$-subspaces
$$
0 \subset V_{\alpha_1} \subset
V_{\alpha_1 + \alpha_2} \subset
V_{\alpha_1 + \alpha_2+\alpha_3} \subset
\cdots \subset \FF_q^n
$$
where $\dim_{\FF_q} V_i = i$.
For this family one has $\Hilb_q(t) = \specialize_{q,t}L(P)$
where the poset $P$ is as described in case (a)
above, when the chains have lengths $\alpha_1,\ldots,\alpha_\ell$.

The second family of $G$-representations $V(q)$ associated
to $\alpha$ is the homology with $\FF_q$-coefficients
of the subcomplex of the {\it Tits building} generated by the
faces indexed by $\alpha$-flags. 
For this family one has $\Hilb_q(t) = \specialize_{q,t}L(P)$
where the poset $P$ is the {\it rim hook} poset $P$ for $\alpha$, having increasing 
chains of lengths $\alpha_1,\ldots,\alpha_\ell$, generalizing
the $\alpha=(1^m,n-m)$ case described in (b) above.

In fact, for either of these classes of posets $P$ associated to $\alpha$, the 
more drastic $q$-specialization $\specialize_q L(P)$
was shown to have two parallel representation-theoretic and invariant-theoretic 
interpretations.  On one hand, $\specialize_q L(P)=\dim_{\FF_q} V(q)$.  
On the other hand, both classes of $\FF_qG$-modules $V(q)$ have ($q=1$)
analogous $\ZZ W$-module counterparts $V$ where $W=\Symm_n$ is the
symmetric group.  In particular, when one regards $W$ acting on $S:=\ZZ[x_1,\ldots,x_n]$
by permuting the variables, so that $S^W$ is the ring of symmetric polynomials,
one finds that the graded intintertwiner space 
$$
M:=\Hom_{\ZZ W}( V , S )
$$ 
turns out to be a free $S^W$-module, and that
$$
\Hilb(M/S^W_+ M,q)=\specialize_q L(P).
$$

\section{Binomial coefficient and Pascal recurrence}
\label{Pascal-section}

\begin{definition}(cf. \cite[(1.2)]{RStanton})
\label{binomial-definition}
Define
a multivariate analogue of a binomial coefficient
$$
\qtbin{n}{k}:= \frac{[n]!}{[k]! \cdot F^k( [n-k]! )}.
$$
\end{definition}

\noindent
It is an easy exercise in the definitions \eqref{n!-definition}
to deduce the following analogue of the usual Pascal recurrence.

\begin{proposition}\textup{(\textit{cf.} \cite[1st equation in (4.2)]{RStanton})}
\label{Pascal-proposition}
$$
\qtbin{n}{k} = F \qtbin{n-1}{k-1} + 
                    \frac{F[k]!}{[k]!} \cdot F \qtbin{n-1}{k}. \qed
$$
\end{proposition}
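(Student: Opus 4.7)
The plan is to expand both sides using the definition $\qtbin{n}{k} = [n]!/([k]! \cdot F^k([n-k]!))$ and exploit the fact that the Frobenius map $F$ is a ring homomorphism, so it commutes with products and with the definition of the binomial itself. Concretely, I would apply $F$ to the defining formulas to get
$$
F\qtbin{n-1}{k-1} = \frac{F([n-1]!)}{F([k-1]!)\cdot F^{k}([n-k]!)}, \qquad F\qtbin{n-1}{k} = \frac{F([n-1]!)}{F([k]!)\cdot F^{k+1}([n-k-1]!)},
$$
so the second summand on the right-hand side, after cancelling $F([k]!)$, becomes $F([n-1]!)/([k]!\cdot F^{k+1}([n-k-1]!))$.

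Next I would bring both terms over the common denominator $[k]!\cdot F^k([n-k]!)$. Two elementary rewrites from \eqref{n!-definition} do the work: the recursion $[k]! = [k]\cdot F([k-1]!)$ gives $F([k-1]!) = [k]!/[k]$, which converts the first term into $[n-1]!\cdot[k]$ shape; and the recursion $F^k([n-k]!) = F^k([n-k])\cdot F^{k+1}([n-k-1]!)$ converts the second term into $[n-1]!\cdot F^k([n-k])$ shape. Adding yields
$$
\frac{F([n-1]!)\cdot\bigl([k] + F^k([n-k])\bigr)}{[k]!\cdot F^k([n-k]!)}.
$$

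The final step is the observation
$$
[k] + F^k([n-k]) = (x_1 + \cdots + x_k) + (x_{k+1} + \cdots + x_n) = [n],
$$
which is immediate from \eqref{n-definition} applied to $F^k$. Combined with $[n]\cdot F([n-1]!) = [n]!$, the right-hand side collapses to $[n]!/([k]!\cdot F^k([n-k]!)) = \qtbin{n}{k}$.

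I do not expect any serious obstacle: the whole argument is bookkeeping of $F$-shifts. The only place demanding care is ensuring that the two recursions $[k]! = [k]\cdot F([k-1]!)$ and $F^k([n-k]!) = F^k([n-k])\cdot F^{k+1}([n-k-1]!)$ are applied with the correct number of $F$-iterates, since misaligned indices would obscure the clean identity $[k] + F^k[n-k] = [n]$ that makes the proof work.
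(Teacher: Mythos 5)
Your proof is correct, and it is precisely the ``easy exercise in the definitions'' that the paper leaves unproven (the proposition carries a \qed with no written argument): expand both sides via Definition~\ref{binomial-definition}, use the recursions $[k]! = [k]\cdot F([k-1]!)$ and $F^k([n-k]!) = F^k([n-k])\cdot F^{k+1}([n-k-1]!)$ to reach the common denominator $[k]!\cdot F^k([n-k]!)$, and finish with $[k]+F^k[n-k]=[n]$ together with $[n]\cdot F([n-1]!)=[n]!$. All $F$-shifts in your computation are aligned correctly, so nothing needs to be added.
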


\section{The weight of a subset}
\label{partition-section}

The Pascal recurrence leads to an interpretation of the binomial coefficient
as a sum over certain partitions (cf. \cite[(5.1)]{RStanton}). For our purpose, it is better
to rephrase it as weight $\wt(S)$ defined for sets $S$ of positive integers: 
a $k$-element set 
\begin{equation}
\label{typical-subset}
S=\{i_1>i_2> \dots > i_k\}
\end{equation}
of positive integers, indexed in decreasing order, bijects
with a partition $\lambda$ whose Ferrers diagram fits inside a $k\times(n-k)$ rectangle:
\begin{equation}
\label{bijection-partition-set}
  \lambda(S) := (i_1,i_2, \dots, i_k) - (k, k-1, \dots, 2,1)\,.
\end{equation}
We thus re-encode the definition in \cite[(5.1)]{RStanton} as follows.
\begin{definition}
For a $k$-element set $S$ of positive integers indexed as in \eqref{typical-subset},
define
$$
\wt(S) := \frac{\prod_{j=1}^k F^{i_j-1}[j]}{[k]!}
        = \prod_{j=1}^k \frac{F^{i_j-1}[j]}{F^{k-j}[j]}.
$$

\begin{example}
  For $k=5$, the set $S = \{9,7,6,4,2\}$ has weight
  \begin{equation*}
    \wt(S) =  \frac{F^8[1] F^6[2] F^5[3] F^3[4] F^1[5]}{[5]!}.
  \end{equation*}
\end{example}

Using the notation 
$$
S+1:=\{i+1: i \in S\}
$$
one can also define this weight recursively as follows:
\begin{equation}
\label{recursive-lambda-weight-definition}
\wt(S):=
\begin{cases}
1 & \text{ if }S=\emptyset\\
\frac{F[k]!}{[k]!} F \wt(\hat{S}) & \text{ if }1 \not\in S \text{ and }S=\hat{S}+1 \\
F \wt(\hat{S}) & \text{ if }1 \in S\text{ and }S  = \{1\} \cup \left( \hat{S}+1 \right).
\end{cases}
\end{equation}
\end{definition}

\begin{proposition}\textup{(\textit{cf.} \cite[Theorem 5.3]{RStanton})}
\label{binomial-as-sum}
$$
\qtbin{n}{k} = \sum_S \wt(S)
$$
where the sum runs over all subsets $S$ of cardinality $k$ of $\{1,\dots n\}$.
\end{proposition}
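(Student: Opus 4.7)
The plan is to induct on $n$, using the Pascal-type recurrence of Proposition~\ref{Pascal-proposition} on the left-hand side and the recursive description \eqref{recursive-lambda-weight-definition} of $\wt(S)$ on the right-hand side. The base case $n=0$ forces $k=0$: the only subset is $\emptyset$, which has weight $1$, and $\qtbin{0}{0} = [0]!/([0]! \cdot F^0[0]!) = 1$. (One may equally well treat the trivially small cases $k=0$ and $k=n$ as sanity checks.)

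For the inductive step, partition the $k$-subsets $S \subseteq \{1,\dots,n\}$ according to whether $1 \in S$. If $1 \in S$, then $S = \{1\} \cup (\hat{S}+1)$ for a unique $(k-1)$-subset $\hat{S} \subseteq \{1,\dots,n-1\}$, and the third case of \eqref{recursive-lambda-weight-definition} gives $\wt(S) = F\wt(\hat{S})$. If instead $1 \notin S$, then $S = \hat{S}+1$ for a unique $k$-subset $\hat{S} \subseteq \{1,\dots,n-1\}$, and the second case of \eqref{recursive-lambda-weight-definition} gives $\wt(S) = \frac{F[k]!}{[k]!}\, F\wt(\hat{S})$. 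Summing over the two classes, applying the inductive hypothesis to each inner sum, and pulling the common factors outside the Frobenius yields
$$
\sum_{\substack{S \subseteq \{1,\dots,n\}\\ |S|=k}} \wt(S)
\ =\ F \sum_{\hat{S}} \wt(\hat{S}) \ +\ \frac{F[k]!}{[k]!}\cdot F \sum_{\hat{S}} \wt(\hat{S})
\ =\ F\qtbin{n-1}{k-1} + \frac{F[k]!}{[k]!}\cdot F \qtbin{n-1}{k},
$$
which is exactly $\qtbin{n}{k}$ by Proposition~\ref{Pascal-proposition}.

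The only non-routine ingredient is the recursive weight description \eqref{recursive-lambda-weight-definition}, which must be reconciled with the product formula defining $\wt(S)$. This is a short bookkeeping check: when $1 \in S$, the factor $F^{i_k-1}[k] = [k]$ peels off, and the identity $[k]!=[k]\cdot F[k-1]!$ absorbs it into the denominator; when $1 \notin S$, shifting every $i_j$ down by one replaces each numerator factor $F^{i_j-1}[j]$ by $F^{i_j-2}[j]$, so the overall numerator is exactly $F$ applied to the numerator for $\hat{S}$, and the ratio $\frac{F[k]!}{[k]!}$ compensates for the denominator. Either the authors will supply this verification or they may choose to bypass the recursion entirely and prove the proposition by directly matching the product formula for $\wt(S)$ against the two terms produced by Proposition~\ref{Pascal-proposition}; the inductive bookkeeping is identical in both presentations, and I expect no deeper obstacle.
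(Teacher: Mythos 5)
Your proof is correct and follows exactly the paper's argument: induction on $n$, splitting the $k$-subsets of $\{1,\dots,n\}$ according to whether $1 \in S$, matching the two resulting subsums (via the recursive weight definition \eqref{recursive-lambda-weight-definition}) with the two terms of the Pascal recurrence in Proposition~\ref{Pascal-proposition}. The paper's version is terser---it leaves both the shift bijection $S \leftrightarrow \hat{S}$ and the reconciliation of \eqref{recursive-lambda-weight-definition} with the product formula implicit---so your write-up simply supplies details the authors omitted.
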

\begin{proof}
Induct on $n$ with trivial base case $n=0$.  
In the inductive step, the sum in the right hand side of the proposition
decomposes as two subsums
$$
\sum_{1\in S} \wt(S) 
+\sum_{1\notin S} \wt(S)
$$
which correspond to the two terms in the 
Pascal recurrence, Proposition~\ref{Pascal-proposition}.
Using the recursive definition \eqref{recursive-lambda-weight-definition} then
completes the inductive step.
\end{proof}

\section{The weight of a permutation via recursion}
\label{permutation-section}

We wish to extend the definition of the weight $\wt(S)$ for a set $S$
to a weight $\wt(w)$ for permutations $w$ in $\Symm_n$, defined
recursively, following \cite[\S 8]{RStanton}.

\begin{definition}\cite[Definition 8.1]{RStanton}
\label{definition-weight-permutation}
Given $w=(w_1,w_2,\ldots,w_n)$ in $W:=\Symm_n$, let $k:=w_1-1$, so
that $0 \leq k \leq n-1$ and $w_1=k+1$.  Regarding $w$ as
a shuffle of its restrictions to the alphabets $[1,k]$ and
$[k+1,n]$, one can factor it uniquely 
\begin{equation}
\label{parabolic-factorization}
w=u \cdot a \cdot b
\end{equation}
with $u$ a minimum-length coset representative of $uW_J$
for the {\it parabolic} or {\it Young subgroup}
$$
\begin{aligned}
W_J&:=\Symm_{[1,k]} \times \Symm_{[k+1,n]}\\
&\cong \Symm_k \times \Symm_{n-k}
\end{aligned}
$$
and where $a, b$ lie in $\in \Symm_{[1,k]}, \Symm_{[k+1,n]}$, respectively.

Since $u$ is a shuffle of the increasing sequences $(1,2,\ldots,k), (k+1,k+2,\ldots,n)$, 
it can be encoded via the set
\begin{equation}
\label{Grassmannian-to-set-encoding}
S(u) := \{u^{-1}(k)>u^{-1}(k-1)>\cdots>u^{-1}(2)>u^{-1}(1)\}\,.
\end{equation}

Since $w_1=k+1$ implies $b(k+1)=k+1$,
the permutation $b$ in $\Symm_{[k+1,n]}$
actually lies in the subgroup $\Symm_{[k+2,n]}$ that fixes $k+1$,
isomorphic to $\Symm_{n-k-2}$.  Denote by $\hat{b}$
the corresponding element of $\Symm_{n-k-2}$.

Now define $\wt(w)$ recursively by saying
that the identity element $e$ in $\Symm_0$ has
$\wt(e):=1$, and otherwise
\begin{equation}
\label{w-weight-definition}
\wt(w):= \wt(S(u)) \cdot \wt(a) \cdot F^{k+1}( \wt(\hat{b}) ).
\end{equation}
\end{definition}

\noindent
Note that since $k = w_1-1$, the integer $1$ is never in $S(u)$. Therefore
writing $S(u)= \hat{S}(u)+1$ for a $k$-element subset of $\{1,2,\ldots,n-1\}$, 
one can use
\eqref{recursive-lambda-weight-definition} to rewrite
\eqref{w-weight-definition} as
\begin{equation}
\label{w-weight-with-hat-lambda}
\wt(w):=  \frac{F[k]!}{[k]!} F\left(\wt(\hat{S}(u)) \right)
               \cdot \wt(a) \cdot F^{k+1}( \wt(\hat{b}) ).
\end{equation}

\begin{example}
For $n=9$, consider within $\Symm_9$ the permutation
$$
\begin{aligned}
w&=
\left(
\begin{matrix}
1 & \underline{2} & 3 & \underline{4} & 5 & \underline{6} & \underline{7} & 8 & \underline{9}\\
6 & \mathbf{2} & 9 & \mathbf{1} & 7 & \mathbf{5} & \mathbf{3} & 8 & \mathbf{4}
\end{matrix}
\right) \\
&=
\underbrace{\left(
\begin{matrix}
1 & \underline{2} & 3 & \underline{4} & 5 &  \underline{6} & \underline{7} & 8 & \underline{9}\\
6 & \mathbf{1} & 7 & \mathbf{2} & 8 & \mathbf{3} & \mathbf{4} & 9 & \mathbf{5}
\end{matrix}
\right)}_u
\cdot
\underbrace{\left(
\begin{matrix}
1 & 2 & 3 & 4 & 5\\
2 & 1 & 5 & 3 & 4
\end{matrix}
\right)}_{a}
\cdot
\underbrace{\left(
\begin{matrix}
6 & 7 & 8 & 9\\
6 & 9 & 7 & 8
\end{matrix}
\right)}_{b}\\
\end{aligned}
$$
One has $k=w_1-1=6-1=5$ here, and note that $b(6)=6$, with 
$$
\hat{b}=
\left(
\begin{matrix}
1 & 2 & 3\\
3 & 1 & 2
\end{matrix}
\right).
$$
Since the values $\{1,2,3,4,5(=k)\}$ occur in positions
$S(u)=\{9,7,6,4,2\}$ of $u$ or $w$, one has that 
$$
\begin{aligned}
\wt(w) 
&= \wt(\{9,7,6,4,2\}) \cdot \wt(a) \cdot F^6 \wt(\hat{b}) \\
&= \frac{F^8[1] F^6[2] F^5[3] F^3[4] F[5]}
          {[5]!} \cdot \wt(a) \cdot F^6 \wt(\hat{b}).
\end{aligned}
$$
Finishing the recursive computation, one finds
\begin{equation*}
  \wt(a)=\wt(21534) = 
  \frac{{\left(x_{4} + x_{5}\right)} x_{2} x_{5}}
  {{\left(x_{3} + x_{4}\right)} x_{1} x_{4}}\,,
  \qquad\qquad
  \wt(b)=\wt(312) =
  \frac{{\left(x_{2} + x_{3}\right)} x_{3}}
  {{\left(x_{1} + x_{2}\right)} x_{2}}\,,
\end{equation*}
\begin{equation*}
  \wt(\{9,7,6,4,2\}) =
  \frac{ x_{9}
    {\left(x_{7} + x_{8}\right)}
    {\left(x_{6} + x_{7} + x_{8}\right)}
    {\left(x_{4} + x_{5} + x_{6} + x_{7}\right)}
    {\left(x_{2} + \dots + x_{6}\right)}}
  { x_{5}
    {\left(x_{4} + x_{5}\right)}
    {\left(x_{3} + x_{4} + x_{5}\right)}
    {\left(x_{2} + x_{3} + x_{4} + x_{5}\right)}
    {\left(x_{1} + \dots + x_{5}\right)}}\,,
\end{equation*}
and therefore
\begin{equation*}
  \wt(w) =
  \frac{ x_{2} x_{9}^{2}
    {\left(x_{8} + x_{9}\right)}
    {\left(x_{6} + x_{7} + x_{8}\right)}
    {\left(x_{4} + x_{5} + x_{6} + x_{7}\right)}
    {\left(x_{2} + \dots + x_{6}\right)}}
  { x_{1} x_{4} x_{8}
    {\left(x_{3} + x_{4}\right)}
    {\left(x_{3} + x_{4} + x_{5}\right)}
    {\left(x_{2} + x_{3} + x_{4} + x_{5}\right)}
    {\left(x_{1} + \dots + x_{5}\right)}}\,.
\end{equation*}
\end{example}

\begin{example}
Here are the values of $\wt(w)$ for $w$ in $\Symm_3$:
\def\arraystretch{1.8}
\begin{equation*}
\begin{array}{|c|c|}\hline
w & \wt(w)
 \\\hline \hline
123 & 1 \\\hline
132 & \frac{F^2[1]}{F[1]}=\frac{x_3}{x_2} \\\hline
213 & \frac{F[1]}{[1]}=\frac{x_2}{x_1}    \\\hline
231 & \frac{F^2[1]}{[1]}=\frac{x_3}{x_1}  \\\hline
312 & \frac{F[2]!}{[2]!}=\frac{F[2]F^2[1]}{[2]F[1]}
        =\frac{(x_2+x_3)x_3}{(x_1+x_2)x_2}    \\\hline
321 & \frac{F[2]!}{[2]!}\frac{F[1]}{[1]}
        =\frac{F[2]F^2[1]}{[2][1]}
        =\frac{(x_2+x_3)x_3}{(x_1+x_2)x_1}\\\hline
\end{array}
\end{equation*}
\vskip.1in
\noindent
Four out of these six permutations $w$  in $\Symm_3$,
namely all except for $\{213, 231\}$,
are themselves recursively labelled forests when regarded as linear orders.
For these four one can check that the value of $\wt(w)$
given in the table agrees with the product formula predicted
by Theorem~\ref{main-theorem}.

On the other hand, the two exceptions $\{213, 231\}$ comprise
$\LLL(P)$ for the recursively labelled forest poset $1 >_P 2 <_P 3$.
One then checks from the values in the table that
$$
L(P)=\wt(213)+\wt(231) =
\frac{x_2}{x_1}+\frac{x_3}{x_1}
=\frac{x_2+x_3}{x_1}=\frac{F[2]}{[1]}
$$  
which again agrees with the prediction of
Theorem~\ref{main-theorem}, namely
$$
\frac{[3]!}{F^{\min(P_\geq 1)-1}[h_1] \cdot
             F^{\min(P_\geq 2)-1}[h_2] \cdot
              F^{\min(P_\geq 3)-1}[h_3]}
= \frac{[3]F[2]F^2[1]}{F^0[1]F^0[3]F^2[1]}
= \frac{F[2]}{[1]}.
$$
\end{example}

\vskip.2in

For later use in Section~\ref{corollaries-section},
we explain how $\wt(w)$ behaves under the 
specialization map $\specialize_q$ from 
\eqref{q-specialization-map} which sends $x_i=F^i[1]$ to $q^{i-1}-q^i$.
Note that 
$$
\specialize_q F^i[n]  = \specialize_q (x_i+x_{i+1} + \cdots + x_{i+n-1})
                      =q^{i-1} - q^{i+n-1}
$$
so that  
\begin{equation}
\label{quotient-q-specialized}
\specialize_q \frac{F^a[n]}{F^b[m]} 
= q^{a-b} \frac{1-q^n}{1-q^m}.
\end{equation}
In particular, when $m=n$ one has 
\begin{equation}
\label{each-Frobenius-is-a-q}
\specialize_q \frac{F^a[n]}{F^b[n]}  = q^{a-b},
\end{equation}
and hence for a $k$-subset 
$S = \{i_1>i_2> \dots > i_k\}$,
\begin{equation}
\label{specialize-set-weight}
\specialize_q \wt(S)
= \specialize_q \prod_{j=1}^k \frac{F^{i_j-1}[j]}{F^{k-j}[j]} 
= q^{\sum_{j=1}^k \left( i_j-(k-j)-1 \right)}.
\end{equation}

\begin{corollary}
\label{specialized-w-weight}
For any permutation $w$ in $\Symm_n$ one has 
$
\specialize_q \wt(w)  = q^{\inv(w)}.
$
\end{corollary}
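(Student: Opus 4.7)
The plan is to induct on $n$, using the recursive definition \eqref{w-weight-definition} of $\wt(w)$ and the parabolic factorization $w = u \cdot a \cdot b$. The base cases $n \leq 1$ are trivial. For the inductive step, I would first establish the combinatorial decomposition
$$
\inv(w) = \inv(u) + \inv(a) + \inv(\hat{b}).
$$
This follows from the standard length-additivity of min-length parabolic coset factorizations ($\inv(w) = \inv(u) + \inv(v)$ with $v = a \cdot b$), the fact that $a$ and $b$ act on disjoint value alphabets $[1,k]$ and $[k+1,n]$ (so $\inv(v) = \inv(a) + \inv(b)$), and the observation that $\hat{b}$ is obtained from $b$ by deleting the fixed point $k+1$, hence $\inv(b) = \inv(\hat{b})$.

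Next, I would apply $\specialize_q$ to the recursive formula \eqref{w-weight-definition}. The factor $\specialize_q \wt(a) = q^{\inv(a)}$ is handled directly by induction. For the factor $\specialize_q \wt(S(u))$, equation \eqref{specialize-set-weight} gives $q^{\sum_j (i_j - (k-j) - 1)}$, where $S(u) = \{i_1 > \cdots > i_k\}$ with $i_j = u^{-1}(k+1-j)$. To match this against $\inv(u)$, I would argue as follows: since $u$ is increasing on both $[1,k]$ and $[k+1,n]$, every inversion of $u$ is a pair in which a large value precedes a small value. The number of large values preceding the small value $v = k+1-j$ (which sits at position $i_j$) equals $i_j - (k+1-j)$, because the $v-1 = k-j$ smaller values all precede $v$. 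Summing over $v$ gives exactly $\sum_j (i_j - (k-j) - 1) = \inv(u)$, so $\specialize_q \wt(S(u)) = q^{\inv(u)}$.

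The main obstacle is the factor $\specialize_q F^{k+1}(\wt(\hat{b}))$, because $\specialize_q$ does not commute with $F$ in general. The key observation is that every $\wt(w)$ (and every $\wt(S)$) is a product of ratios of the form $F^a[m]/F^b[m]$ with matching \emph{sizes} $m$ in numerator and denominator: this is visible from the explicit formula for $\wt(S)$ and propagates through the recursion \eqref{w-weight-definition} because $F^{k+1}$ preserves the matched shape and so does multiplication. By \eqref{each-Frobenius-is-a-q}, each such ratio specializes to a pure power of $q$ whose exponent $a-b$ is invariant under a common shift of $a$ and $b$. Hence $\specialize_q F^{k+1}(\wt(\hat{b})) = \specialize_q \wt(\hat{b}) = q^{\inv(\hat{b})}$ by induction. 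Multiplying the three specialized factors then yields $q^{\inv(u) + \inv(a) + \inv(\hat{b})} = q^{\inv(w)}$, completing the induction.
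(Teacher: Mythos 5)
Your proposal is correct and follows essentially the same route as the paper's first proof: induction on $n$ via the parabolic factorization $w = u\cdot a\cdot b$, the additivity $\inv(w)=\inv(u)+\inv(a)+\inv(\hat b)$, and the identification $\specialize_q \wt(S(u)) = q^{\inv(u)}$ via \eqref{specialize-set-weight}. In fact you supply two details the paper leaves implicit --- the counting argument for $\inv(u)=\sum_j \left(i_j-(k-j)-1\right)$, and the ``matched ratios $F^a[m]/F^b[m]$'' observation justifying $\specialize_q F^{k+1}(\wt(\hat b)) = \specialize_q \wt(\hat b)$ --- both of which are correct and genuinely needed, since $\specialize_q$ does not commute with $F$ on general rational functions.
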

\begin{proof}
Induct on $n$, with $n=0$ as a trivial base case.
In the inductive step, if $w_1=k+1$ and 
$w = u \cdot a \cdot b$ is the parabolic factorization from
\eqref{parabolic-factorization}, then
\begin{equation}
\label{additivity-of-inv}
\inv(w) = \inv(u) + \inv(a) + \inv(b).
\end{equation}
Note that $\inv(b)=\inv(\hat{b})$.
Also note that in \eqref{Grassmannian-to-set-encoding},
if one has $S(u)=\{i_1 > \cdots > i_k\}$, then 
$
\inv(u)=\sum_{j=1}^k \left( i_j-(k-j)-1 \right)
$
so that \eqref{specialize-set-weight} implies
\begin{equation}
\label{u-and-S-have-same-q-weight}
q^{\inv(u)} = \specialize_q \wt(S(u)).
\end{equation}
Since by definition one has
$$
\wt(w) = \wt(S(u)) \cdot \wt(a) \cdot F^{k+1} \wt(\hat{b})
$$
the assertion of the corollary follows from
\eqref{additivity-of-inv}, \eqref{u-and-S-have-same-q-weight},
together with the inductive hypothesis applied to $a$ and $\hat{b}$.
\end{proof}

\section{The weight of a permutation, via a search tree}
\label{permutation-tree-section}

The goal of this section is to encode the recursive nature of the definition
of the weight $\wt(w)$ for a permutation $w$
in a standard combinatorial data structure, an increasing binary search tree.
Once this tree is computed, one no longer needs recursion
to define $\wt(w)$.

\newcommand{\IC}{\operatorname{T}}

\begin{definition} (cf. Stanley \cite[\S 1.3]{Stanley})
  For any word $w=w_1\dots w_m$ without repetition, define recursively its
  \emph{increasing binary tree} $\IC(w)$ as follows:
  \begin{itemize}
  \item if $w$ is empty (i.e. $m=0$), then $\IC(w)$ is the empty binary tree;
  \item else denote by $k$ the index of the smallest letter of $w$. Then
    $\IC(w)$ is the binary tree whose root is labelled $w_k$, whose left
    subtree is $\IC(w_1\dots w_{k-1})$ and whose right subtree is $\IC(w_{k+1}\dots
    w_m)$.
  \end{itemize}
\end{definition}
Now for a given permutation $w$, consider the tree $\IC(w^{-1})$.
For each pair of labeled nodes $(\alpha, \beta)$ such that 
$\alpha$ occurs in the left subtree rooted at $\beta$, 
define a {\it numerator} polynomial
$N(\alpha,\beta)$ and {\it denominator} polynomial $D(\alpha,\beta)$ by
$$
\begin{array}{rll}
   D(\alpha,\beta) &:= x_{w(\beta)-1} + \dots + x_{w(\beta)-\ell} 
                        &= F^{w(\beta)-\ell-1} [\ell] \\
   N(\alpha,\beta) &:= F^{r+1}(D(\alpha,\beta)) 
                        &=F^{w(\beta)+r-\ell} [\ell]
\end{array}
$$
where $\ell:=\ell(\alpha,\beta)$ (resp. $r:=r(\alpha,\beta)$) is the number of nodes
in the left (resp. right) subtree of $\beta$ whose label is larger or equal
(resp. smaller) than $\alpha$. Note that since $\alpha$ is in the 
left subtree of $\beta$, one always has $\ell \geq 1$.

\begin{example}
  \label{example-non-recursive-weight}
  For example, consider the permutation $w = 541736829$. Its inverse is
  $w^{-1}=385216479$. The corresponding increasing tree $\IC(w^{-1})$ is therefore
  \[
  \IC(w^{-1})\ =\
  \raisebox{1mm}{
  \begin{tikzpicture}[baseline=(current bounding box.center), scale=0.7, inner
    sep=0.3mm, 
    level distance=7mm 
   ]
    \node[circle, draw] {1} [grow'=up] 
        child {node[circle, draw] {2}
           child{node[circle,draw] {3}
             child{ {} edge from parent[draw=none]}
             child{node[circle,draw] {5}
               child{node[circle,draw] {8}}
               child{ {} edge from parent[draw=none]}
             }
           }
           child{ {} edge from parent[draw=none]}
        } 
        child {node[circle, draw] {4}
           child{node[circle,draw] {6}}
           child{node[circle,draw] {7}
             child{ {} edge from parent[draw=none]}
             child{node[circle,draw] {9}}
           }
        } 
    ;
  \end{tikzpicture}}
  \]
and the relevant pairs $(\alpha,\beta)$ and polynomials 
$N(\alpha,\beta), D(\alpha,\beta)$ are as follows:
  \begin{equation*}
    \begin{array}{|c|c|c|c|c|c|c|}\hline
      \alpha & \beta & w(\beta) & \ell & r & N(\alpha,\beta)         & D(\alpha,\beta)\\
             &       &          &      &   &:=F^{r+1} D(\alpha,\beta)& :=F^{w(\beta)-\ell-1}([\ell]) \\\hline\hline
      2 & 1 &  5        & 4 & 0 & x_5+x_4+x_3+x_2 & x_4+x_3+x_2+x_1\\\hline
      3 & 1 &  5        & 3 & 0 & x_5+x_4+x_3     & x_4+x_3+x_2\\\hline
      5 & 1 &  5        & 2 & 1 & x_6+x_5         & x_4+x_3\\\hline
      8 & 1 &  5        & 1 & 3 & x_8             & x_4\\\hline
      3 & 2 &  4        & 3 & 0 & x_4+x_3+x_ 2    & x_3+x_2+x_1\\\hline
      5 & 2 &  4        & 2 & 0 & x_4+x_3         & x_3+x_2\\\hline
      8 & 2 &  4        & 1 & 0 & x_4             & x_3\\\hline
      8 & 5 &  3        & 1 & 0 & x_3             & x_2 \\\hline
      6 & 4 &  7        & 1 & 0 & x_7             & x_6\\\hline
    \end{array}
  \end{equation*}
\end{example}
\begin{proposition}
  For any permutation $w$, the weight of $w$ equals
  \begin{equation}
    \label{equation-non-recursive-weight}
    \wt(w) =
    \prod_{(\alpha,\beta)} \frac{N(\alpha,\beta)}{D(\alpha,\beta)}\,,
  \end{equation}
  where the product is over $(\alpha, \beta)$ with
  $\alpha$ in the left subtree of $\IC(w^{-1})$ rooted at $\beta$.
\end{proposition}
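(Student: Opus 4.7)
The plan is to induct on $n$, using the recursive definition \eqref{w-weight-definition} of $\wt(w)$ and matching its three factors $\wt(S(u))$, $\wt(a)$, and $F^{k+1}(\wt(\hat{b}))$ to three natural sub-products of the right-hand side of \eqref{equation-non-recursive-weight}. The base case $n=0$ is trivial. For the inductive step, note that the root of $\IC(w^{-1})$ is always labelled $1$ (the smallest letter of $w^{-1}$) and sits at position $w(1)=k+1$ in the word $w^{-1}$, so its left subtree is $\IC(w^{-1}_1\cdots w^{-1}_k)$ with label set $\{w^{-1}(1),\dots,w^{-1}(k)\}=S(u)$, and its right subtree carries the remaining labels $[2,n]\setminus S(u)$. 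The pairs $(\alpha,\beta)$ in \eqref{equation-non-recursive-weight} therefore partition into three classes: (i) $\beta=1$ and $\alpha\in S(u)$; (ii) both $\alpha,\beta$ sit in the left subtree of the root; (iii) both sit in the right subtree.

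For class (i), write $S(u)=\{i_1>\cdots>i_k\}$ and take $\alpha=i_j$. A direct count gives $\ell(i_j,1)=j$ and $r(i_j,1)=i_j-k+j-2$ (remembering that the root $1$ has been removed from the set of right subtree labels). Substituting into $N$ and $D$ together with $w(1)=k+1$ collapses the exponents to give $N(i_j,1)/D(i_j,1)=F^{i_j-1}[j]/F^{k-j}[j]$, which is precisely the $j$-th factor of $\wt(S(u))$ in its explicit product form. Multiplying over $j$ shows the class (i) subproduct equals $\wt(S(u))$.

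For classes (ii) and (iii) the idea is to identify the two subtrees with $\IC(a^{-1})$ and $\IC(\hat{b}^{-1})$. From $w=u\cdot a\cdot b$ one deduces $w^{-1}(i)=u^{-1}(a^{-1}(i))$ for $i\in[1,k]$ and $w^{-1}(i)=u^{-1}(b^{-1}(i))$ for $i\in[k+2,n]$, and since $u^{-1}$ is increasing on each of $[1,k]$ and $[k+1,n]$, the standardizations of the subwords $w^{-1}_1\cdots w^{-1}_k$ and $w^{-1}_{k+2}\cdots w^{-1}_n$ are exactly $a^{-1}$ and $\hat{b}^{-1}$. Hence the two subtrees have the same shape as $\IC(a^{-1})$ and $\IC(\hat{b}^{-1})$, with $\ell$ and $r$ preserved under the induced bijection of nodes $\beta\leftrightarrow\beta'$; and tracing $w$ through its factorization gives $w(\beta)=a(\beta')$ in the left case and $w(\beta)=(k+1)+\hat{b}(\beta')$ in the right. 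The first identity makes class (ii) match $\wt(a)$ factor by factor, while the additive shift by $k+1$ in the second identity turns each class (iii) factor into $F^{k+1}$ applied to the corresponding factor for $\hat{b}$. Applying the induction hypothesis to $a$ and $\hat{b}$ then shows that classes (ii) and (iii) contribute $\wt(a)$ and $F^{k+1}(\wt(\hat{b}))$ respectively, and multiplying all three classes reproduces \eqref{w-weight-definition}. The main obstacle will be the class (iii) bookkeeping: one must verify carefully that the additive shift $w(\beta)=(k+1)+\hat{b}(\beta')$ interacts with the exponents $w(\beta)+r-\ell$ and $w(\beta)-\ell-1$ appearing in $N$ and $D$ so as to produce precisely the single overall Frobenius $F^{k+1}$ predicted by the recursive definition.
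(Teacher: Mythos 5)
Your proof is correct and takes essentially the same approach as the paper's: induction on $n$, splitting the pairs $(\alpha,\beta)$ into those with $\beta$ the root, those lying entirely in the left subtree, and those lying entirely in the right subtree, and identifying the three subproducts with $\wt(S(u))$, $\wt(a)$, and $F^{k+1}(\wt(\hat{b}))$ via the order-preserving relabeling (standardization) of the two subtrees. Incidentally, your count $r(i_j,1)=i_j-k+j-2$ is the correct one (it accounts for the root label $1$ not lying in the right subtree); the paper's intermediate formula $r(\alpha_j,1)=\alpha_j-1-(k-j)$ has a harmless off-by-one slip, and it is your value of $r$ that produces the paper's own subsequent (correct) expressions $N(\alpha_j,1)=F^{\alpha_j-1}[j]$ and $D(\alpha_j,1)=F^{k-j}[j]$.
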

\begin{proof}
  Induct on $n$, with trivial base cases $n =0,1$. 
  In the inductive step, let $L$ and $R$ be the left
  and right subtrees of the root of $\IC(w^{-1})$. 
  Define $a$, $u$ and $\hat b$ as in
  Definition \ref{definition-weight-permutation}. Then
  \begin{equation}
    \wt(w):=  \wt(S(u)) \cdot \wt(a) \cdot F^{k+1}( \wt(\hat{b}) ).
  \end{equation}
  Assume \eqref{equation-non-recursive-weight} holds for $w:=a$ or $w:=\hat b$;
  we wish to prove it holds for $w=u \cdot a \cdot b$.

  The tree $\IC(a^{-1})$ is obtained from $L$ by renumbering the
  labels to $\{1,\dots,k\}$ keeping their relative order. Let
  $(\alpha,\beta)$ be two nodes of $L$ and $(\alpha',\beta')$ their renumbering
  in $\IC(a^{-1})$. It should be clear that 
$$
\begin{aligned}
r(\alpha,\beta)&=r(\alpha',\beta')\,, \\
\ell(\alpha,\beta)&=\ell(\alpha',\beta')\,, \\
w(\beta) &= a(\beta')\,.
\end{aligned}
$$
As a consequence
  \begin{equation}
    \wt(a) = 
    \prod_{(\alpha',\beta')}
    \frac{N(\alpha',\beta')}{D(\alpha',\beta')}
    =
    \prod_{\substack{(\alpha,\beta)\\ \alpha,\beta\in L}}
    \frac{N(\alpha,\beta)}{D(\alpha,\beta)}\,.
  \end{equation}
  Similarly, the values of $r$ and $\ell$ also agree in $\IC({\hat b}^{-1})$ and
  $R$, but the difference is that for two corresponding nodes $\beta\in R$ and
  $\beta'\in \IC({\hat b}^{-1})$, one has $w(\beta) = {\hat b}(\beta') + k
  + 1$. It follows that
  \begin{equation}
    F^{k+1}(\wt(\hat b)) =
    \prod_{(\alpha',\beta')}
    F^{k+1}\left(\frac{N(\alpha',\beta')}{D(\alpha',\beta')}\right) =
    \prod_{\substack{(\alpha,\beta)\\ \alpha,\beta\in R}}
    \frac{N(\alpha,\beta)}{D(\alpha,\beta)}\,.
  \end{equation}
  It therefore remains to show that $\wt(S(u))$ is exactly the product over
  pairs $(\alpha, \beta)$ with $\alpha = 1$. Ordering decreasingly the
  labels $\{\alpha_1>\dots>\alpha_k\}$ of $L$ which are also the elements of
  $S(u)$, one sees that 
$$
\begin{aligned}
\ell(\alpha_j, 1) &= j\,,\\
r(\alpha_j, 1) &=  \alpha_j-1-(k-j)\,.
\end{aligned}
$$
Since $w(1) = k+1$, one has
$$
  \begin{aligned}
    D(\alpha_j, 1) &= F^{k-j}[j]\,, \\
    N(\alpha_j, 1) &= F^{\alpha_j-1}[j]\,.
  \end{aligned}
$$
  Therefore
  \begin{equation}
    \prod_{\alpha\in L}
    \left(\frac{N(\alpha,1)}{D(\alpha,1)}\right) =
    \prod_{j=1}^f \frac{F^{\alpha_j-1}[j]}{F^{k-j}[j]} =
    \frac{\prod_{j=1}^k F^{\alpha_j-1}[j]}{[k]!} =
    \wt(S(u))\,.
  \end{equation}
  This proves that \eqref{equation-non-recursive-weight} holds for $w=u\cdot a
  \cdot b$. 
\end{proof}
\begin{example}
  Continuing Example \ref{example-non-recursive-weight}, one sees that $a =
  4132$ so that $a^{-1} = 2431$ and $b = 57689$ so that $\hat{b} = 2134$ and 
  $\hat{b}^{-1} = 2134$. As a consequence:

 \begin{equation*}
    \IC(a^{-1}) =
  \begin{tikzpicture}[baseline=(current bounding box.center), scale=0.7, inner
    sep=0.3mm, 
    level distance=7mm 
   ]
    \node[circle, draw] {1}[grow'=up]
           child{node[circle,draw] {2}
             child{ {} edge from parent[draw=none]}
             child{node[circle,draw] {3}
               child{node[circle,draw] {4}}
               child{ {} edge from parent[draw=none]}
             }
           }
           child{ {} edge from parent[draw=none]}
      ;
  \end{tikzpicture}
    \qquad\text{and}\qquad
   \IC(\hat{b}^{-1}) =
  \begin{tikzpicture}[baseline=(current bounding box.center), scale=0.7, inner
    sep=0.3mm, 
    level distance=7mm 
   ]
       \node[circle, draw] {1} [grow'=up]  
           child{node[circle,draw] {2}}
           child{node[circle,draw] {3}
             child{ {} edge from parent[draw=none]}
             child{node[circle,draw] {4}}
           }
    ;
  \end{tikzpicture}
\end{equation*}

\end{example}

\noindent
This gives a different way to view the assertion 
$
\specialize_q \wt(w)  = q^{\inv(w)}.
$
of Corollary~\ref{specialized-w-weight}.

\begin{proof}[Second proof of Corollary~\ref{specialized-w-weight}]
Rephrasing Proposition~\ref{equation-non-recursive-weight} as
$$
\wt(w) = \prod_{(\alpha,\beta)} \frac{F^{r(\alpha,\beta)+1} D(\alpha,\beta)}{D(\alpha,\beta)}
$$
and bearing in mind \eqref{each-Frobenius-is-a-q}, it suffices to check that
  \begin{equation}
  \label{inversions-via-increasing-tree}
    \inv(w) = \sum_{(\alpha,\beta)} \left( r(\alpha,\beta) + 1 \right) \,.
  \end{equation}
  Let $(i<j)$ be an inversion of $w$, meaning that $w_j<w_i$. Looking
  at $w^{-1}$, this means that $j$ occurs to the left of $i$ in the word
  $w^{-1}=(w^{-1}(1), w^{-1}(2), \dots, w^{-1}(n))$.
  There are two possibilities:
  \begin{itemize}
  \item {\sf For all $r$ such that $w_j<r<w_i$ one has $i<w^{-1}(r)$.} \\
     In other words, in $w^{-1}$ all letters between $j$ and
    $i$ are bigger than $i$. By the construction of the tree
    $T=\IC(w^{-1})$, this implies that $j$ lies in the left subtree of $i$.
  \item {\sf There exists an $r$ such that $w_j<r<w_i$ and $w^{-1}(r)<i$.}  \\
    In other words, one can find a letter smaller than $i$ lying
    between $j$ and $i$ in $w^{-1}$. Let $k$ be the minimal such letter:
    \begin{equation}
      k:=\min\{w^{-1}(r)\ \mid\ w_j<r<w_i\}.
    \end{equation}
    By the construction of $T=\IC(w^{-1})$, the letter $k$ is the label of
    the only node $m$ of $T$ such that $j$ and $i$ are in the left and right
    subtrees of $m$. Therefore this $i$ counts for $1$ in $r(\alpha,\beta)$
    where $\alpha:=j$ and $\beta:=k$.
  \end{itemize}
  As a consequence, fixing $\alpha$, the sum $\sum_\beta \left(r(\alpha,\beta)+1 \right)$ 
  is exactly the number of $i<\alpha$ such that $w_i >  w_\alpha$.  This 
  proves \eqref{inversions-via-increasing-tree}.
\end{proof}

\section{Proof of Theorem~\ref{main-theorem}}
\label{proof-section}

For a recursively labelled forest $P$ on $\{1,2,\ldots,n\}$, we wish to
prove equality of the two rational functions
\begin{equation}
\label{define-LHS-RHS}
\begin{aligned}
L(P)&:=\sum_{w \in \LLL(P) } \wt(w)\,, \\
H(P)&:=\frac{[n]!}{\prod_{i=1}^n F^{\min(P_{\geq i})-1}[h_i]}\,.
\end{aligned}
\end{equation}
Proceed by induction on the following quantity:  the
sum of $n$ and the number of incomparable pairs $i,j$ in $P$.
In the base case where this quantity is zero, in particular $n=0$,
and the result is trivial. In the inductive step, there are two cases.
\vskip .1in
\noindent
{\sf Case 1.} There exist two elements $i,j$ having
subtrees $P_{\geq i},  P_{\geq j}$ labelled by two contiguous
intervals of integers, say 
$$
\begin{aligned}
P_{\geq i}&=[r+1,r+s]\,,\\
P_{\geq j}&=[r+s+1,r+s+t]\,.
\end{aligned}
$$

In this case, form the poset $P_{i < j}$ by
taking the transitive closure of $P$ and the extra relation
$i<j$.  Defining $P_{j < i}$ similarly, one has the disjoint decomposition
$$
\LLL(P) = \LLL(P_{i < j}) \sqcup \LLL(P_{j < i})
$$
since any $w$ in $\LLL(P)$ either has $i <_w j$ or
$j <_w i$.  Therefore 
$$
L(P) = L(P_{i < j}) + L(P_{j < i})\,,
$$
and hence it remains to show 
\begin{equation}
\label{desired-hook-formula-recursion}
H(P) = H(P_{i < j}) + H(P_{j < i})\,.
\end{equation}
Because $P, P_{i<j}, P_{j<i}$ share the same size $n$,
and share the same label sets on their subtrees
$P_{\geq k}$ for $k \neq i,j$,
the desired equality \eqref{desired-hook-formula-recursion}
is equivalent to checking
$$
\frac{1}{F^r[s] \cdot F^{r+s}[t]} =
\frac{1}{F^{r}[s+t] \cdot F^{r+s}[t]} 
+\frac{1}{F^r[s] \cdot F^{r}[s+t]}\,.
$$
Over a common denominator, this amounts to checking 
$$
F^r[s+t] = F^r[s]+F^{r+s}[t]\,,
$$
which is immediated from the definition \eqref{n-definition} of $[n]$.

\vskip .1in
\noindent
{\sf Case 2.} There are no such pairs of elements $i,j$ as
in Case 1.

  This means that $P$ is a {\it recursively labelled binary tree}, meaning
that it has a minimum element, say $k+1$, and every element $i$ in $P$
is covered by at most one element $j<_\ZZ i$ and at most one
element $j >_\ZZ i$.  In particular, this means that the poset $P_1$
obtained by restricting $P$ to the values $[1,k]$ is again a recursively
labelled binary tree.  Similarly the restriction of $P$ to the values $[k+2,n]$
is obtained from some recursively labelled binary tree $P_2$ on 
values $[1,n-k-1]$ by adding $k+1$ to all of its vertex labels;  
denote this restriction $F^{k+1}(P_2)$.

  One then calculates that
$$
\begin{aligned}
H(P)&= \frac{[n]!}{\prod_{i=1}^n F^{\min(P_{\geq i})-1}[h_i]} \\
&= \frac{F[n-1]!}{\prod_{i \neq k+1} F^{\min(P_{\geq i})-1}[h_i]} \\
&= \frac{F[k]!}{[k]!} \cdot
    F \qtbin{n-1}{k} \cdot
    \frac{[k]!}{\prod_{i=1}^k F^{\min(P_{\geq i})-1}[h_i]}  \cdot
    \frac{F^{k+1}[n-1-k]!}{\prod_{i=k+2}^n F^{\min(P_{\geq i})-1}[h_i]} \\
&= \frac{F[k]!}{[k]!} \cdot
    F \qtbin{n-1}{k} \cdot
    H(P_1) \cdot F^{k+1}H(P_2)\,.
\end{aligned}
$$
It remains to show that $L(P)$ satisfies the same recurrence.
Note that each $w$ in $\LLL(P)$ has $w_1=k+1$, 
because $k+1$ is the minimum element of $P$.  Furthermore,
when one decomposes $w=u \cdot a \cdot b$ as in
the parabolic factorization \eqref{parabolic-factorization} used to define
$\wt(w)$, one finds that $a, \hat{b}$  lie in
$\LLL(P_1), \LLL(P_2)$, respectively.  Conversely, any
such triple $(u,a, \hat{b})$, where $u$ is a shuffle
of the sequences $(1,2,\ldots,k), (k+1,k+2,\ldots,n)$
having $u(1)=k+1$, gives rise to an element $w=u \cdot a \cdot b$ of $\LLL(P)$.
Thus
$$
\begin{aligned}
L(P) &=\sum_{(u,a,\hat{b})} \wt(S(u)) \wt(a) F^{k+1} \wt(\hat{b}) \\
&=\frac{F[k]!}{[k]!} \left( \sum_{\substack{k-\text{subsets }\hat{S}\\ \text{ of }\{1,2,\ldots,n-1\} }} 
     F\left(\wt(\hat{S})\right) \right)
                           \left( \sum_a \wt(a) \right) 
                           \left( \sum_{\hat{b}} F^{k+1}  \wt(\hat{b}) \right)\\
&=\frac{F[k]!}{[k]!} F\qtbin{n-1}{k} \cdot L(P_1) \cdot F^{k+1} L(P_2)
\end{aligned}
$$
using \eqref{w-weight-with-hat-lambda} and \eqref{binomial-as-sum}.

\vskip.2in
Thus in both cases, $L(P)$ and $H(P)$ satisfy the same recurrence, 
concluding the proof of Theorem~\ref{main-theorem}.

\section{Specializing to the formula of Bj\"orner and Wachs}
\label{corollaries-section}

\noindent
It is now easy to deduce Bj\"orner and Wachs' identity \eqref{Bjorner-Wachs-equation}
as the $q$-specialization of Theorem~\ref{main-theorem}:
one has from Corollary~\ref{specialized-w-weight} that 
$$
\specialize_q L(P)=\sum_{w \in \LLL(P)}q^{\inv(w)}\,,
$$
while the right side $H(P)$ of Theorem~\ref{main-theorem} has $q$-specialization
$$
\begin{aligned}
\specialize_q H(P)
&= \specialize_q 
\prod_{i=1}^n \frac{F^{i-1}[n-i+1]}{F^{\min(P_{\geq i})-1}[h_i]}\\
&= q^{\sum_{i=1}^n (i-\min(P_{\geq i}))} 
      \prod_{i=1}^n \frac{1-q^{n-i+1}}{1-q^{h_i}} 
\quad \text{ using \eqref{quotient-q-specialized}} \\
&= q^{\inv(P)} \frac{[n]!_q}{\prod_{i=1}^n [h_i]}
\end{aligned}
$$
where the last equality used the following fact:  since
$P$ is a recursively labelled forest, for each $i$, the
quantity $i-\min(P_{\geq i})$ counts the contribution to $\inv(P)$ coming
from the pairs $(i,j)$ where $j$ lies in $P_{\geq i}$.

\section{Algebra morphisms}
\label{algebra-section}
Theorem~\ref{main-theorem} has an interesting rephrasing in terms of a
$\QQ$-linear map from the ring of 
{\it free quasisymmetric functions} $\FQSym$ (or {\it Malvenuto-Reutenauer algebra})
into a certain target ring.  We define these objects here.

\begin{definition}
Recall from \cite{MalvenutoReutenauer} that the algebra $\FQSym$ 
has $\QQ$-basis elements
$$
\left\{ \FFF_w : w \in \bigsqcup_{n \geq 0} \Symm_n \right\},
$$ 
with multiplication defined $\QQ$-bilinearly as follows:
for $a, b$ lying in $\Symm_k, \Symm_\ell$ one has
$$
\FFF_a \cdot \FFF_b := 
\sum_{w} \FFF_w
$$
where
$w$ runs through all {\it shuffles} of the words 
$$
\begin{aligned}
a&=(a_1,\ldots,a_k), \text{ and }\\
F^k(b)&:=(b_1+k,\ldots,b_\ell+k)\,.
\end{aligned}
$$
\end{definition}
\noindent
One of the original motivations for introducing the 
ring $\FQSym$ is the following.  Define for
each poset $P$ the element
\begin{equation}
\FFF_P := \sum_{w \in \LLL(P)} \FFF_w
\end{equation}
in $\FQSym$.  Then for two posets $P, Q$ on elements $\{1,2,\ldots,k\},
\{1,2,\ldots,\ell\}$, respectively, one has in $\FQSym$ that
\begin{equation}
\label{FQSym-motivation}
\FFF_P \cdot \FFF_Q = \FFF_{P \sqcup F^k(Q)}\,,
\end{equation}
where $P \sqcup F^k(Q)$ denotes the poset on $\{1,2,\ldots,k+\ell\}$ which
is the disjoint union of $P$ with the
poset $F^k(Q)$ on $\{k+1,k+2,\ldots,k+\ell\}$ obtained by 
adding $k$ to each label in $Q$.

\begin{definition}
Let the semigroup $\NN=\{1,F,F^2,\ldots\}$ 
act on the rational functions $\QQ(\xx)=\QQ(x_1,x_2,\ldots)$ 
via the Frobenius map as before:
$F(x_i)=x_{i+1}$.  Then define the {\it skew semigroup algebra} $\QQ(\xx)\#\NN$
to be the free $\QQ(\xx)$-module on basis $\{1,u,u^2,\ldots\}$,
with multiplication defined $\QQ$-linearly by 
$$
f(\xx) u^k \cdot g(\xx) u^\ell=
\left(f(\xx) F^k(g(\xx))\right) u^{k+\ell}\,.
$$
\end{definition}
\noindent
One of our motivations for introducing $\QQ(\xx)\#\NN$ is that,
in addition to its $\QQ(\xx)$-basis $\{1,u,u^2,\ldots\}$, it
also has a $\QQ(\xx)$-basis of {\it divided powers} 
$\{1,u^{(1)},u^{(2)},\ldots\}$, where 
$$
u^{(n)}:= \frac{1}{[n]!} u^n,
$$
and this basis has our binomial coefficients as multiplicative 
structure constants:
\begin{equation}
\label{binomial-convolution}
u^{(k)} \cdot u^{(\ell)} = \qtbin{k+\ell}{k} u^{(k+\ell)}\,.
\end{equation}

\begin{definition}
Define the $\QQ$-linear map 
$$
\begin{array}{rcl}
\FQSym & \overset{\phi_{\inv}}{\longrightarrow} & \QQ(\xx)\#\NN \\
\FFF_w&\longmapsto &\frac{\wt(w)}{[n]!} u^n = \wt(w) \cdot u^{(n)}
\end{array}
$$
for $w$ in $\Symm_n$.  Note that
\begin{equation}
\label{phi-inv-on-posets}
\phi_{\inv}( \FFF_P ) = L(P) \cdot u^{(n)}\,.
\end{equation}
\end{definition}

\noindent
This $\QQ$-linear map $\phi_{\inv}$ turns
out {\it not} to be an algebra morphism. 
E.g., one can check via explicit computations that
$$
\begin{aligned}
\phi_{\inv}(\FFF_1 \cdot \FFF_{213})
&=\phi_{\inv}(\FFF_{1324})+\phi_{\inv}(\FFF_{3124})
  +\phi_{\inv}(\FFF_{3214})+\phi_{\inv}(\FFF_{3241}) \\
&\neq \phi_{\inv}(\FFF_1) \cdot \phi_{\inv}(\FFF_{213})\,.
\end{aligned}
$$

\noindent
However, the import of Theorem~\ref{main-theorem} is that
$\phi_{\inv}$ {\it becomes} an algebra morphism when restricted
to an appropriate subalgebra of $\FQSym$.

\begin{definition}
Recall from \cite{LodayRonco} that the 
{\it Loday-Ronco algebra of binary trees} $\PBT$ can be
defined as the subalgebra of $\FQSym$ spanned by all $\{\FFF_P\}$
as $P$ runs through all recursively labelled forests.
\end{definition}

\begin{proposition}
When restricted from $\FQSym$ to $\PBT$, the
map $\phi_{\inv}$ becomes an algebra homomorphism 
$\PBT \overset{\phi_{\inv}}{\longrightarrow} \QQ(\xx)\#\NN$.
\end{proposition}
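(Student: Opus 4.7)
The plan is to verify algebra multiplicativity directly on the spanning set $\{\FFF_P : P \text{ recursively labelled forest}\}$ of $\PBT$. The three ingredients are: (i) the product rule \eqref{FQSym-motivation}, which stays inside $\PBT$ because $P \sqcup F^k(Q)$ is itself a recursively labelled forest (its subtrees for $i \leq k$ are the subtrees of $P$, and for $i = k+j$ are shifts by $k$ of the subtrees of $Q$, so all remain intervals); (ii) Theorem~\ref{main-theorem}, which converts each $L(P)$ into a hook product; and (iii) the divided-power convolution formula \eqref{binomial-convolution}.

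First I would compute the left-hand side $\phi_{\inv}(\FFF_P \cdot \FFF_Q)$. Using \eqref{FQSym-motivation} and \eqref{phi-inv-on-posets} together with the subtree description above, Theorem~\ref{main-theorem} applied to $P \sqcup F^k(Q)$ gives
\[
\phi_{\inv}(\FFF_P \cdot \FFF_Q) \;=\; \frac{[k+\ell]!}{\displaystyle\prod_{i=1}^k F^{\min(P_{\geq i})-1}[h_i^P] \;\cdot\; F^k\!\left(\prod_{j=1}^\ell F^{\min(Q_{\geq j})-1}[h_j^Q]\right)} \cdot u^{(k+\ell)}.
\]
Then I would compute the right-hand side $\phi_{\inv}(\FFF_P)\cdot \phi_{\inv}(\FFF_Q) = L(P)\,u^{(k)} \cdot L(Q)\,u^{(\ell)}$ inside $\QQ(\xx)\#\NN$. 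Expanding via $f\,u^{(k)} = \frac{f}{[k]!}u^k$ and the skew relation $u^k \cdot g = F^k(g)\,u^k$, this product equals $L(P)\cdot F^k(L(Q)) \cdot \qtbin{k+\ell}{k}\,u^{(k+\ell)}$.

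Finally I would substitute Theorem~\ref{main-theorem} into $L(P)$ and $F^k(L(Q))$ (noting the latter contributes $F^{k+\min(Q_{\geq j})-1}[h_j^Q]$ in its denominator, matching the subtrees of $P \sqcup F^k(Q)$ lying above elements $k+j$), and use $\qtbin{k+\ell}{k} = [k+\ell]!/([k]!\,F^k([\ell]!))$. The numerator factors $[k]!$ and $F^k([\ell]!)$ cancel precisely, leaving the hook expression in the previous display. Thus the two sides agree on the basis, which suffices by $\QQ$-linearity. There is no serious obstacle: the only point to handle with care is the noncommutativity in $\QQ(\xx)\#\NN$, but this is exactly what the Frobenius shift $F^k$ encodes, so the matching of $F^k$'s on both sides is automatic once one writes down the skew product correctly.
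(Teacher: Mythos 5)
Your proposal is correct and takes essentially the same route as the paper: both arguments convert $\phi_{\inv}(\FFF_P\cdot\FFF_Q)=\phi_{\inv}(\FFF_{P\sqcup F^k(Q)})$ and $\phi_{\inv}(\FFF_P)\cdot\phi_{\inv}(\FFF_Q)$ into hook products via Theorem~\ref{main-theorem} and match them using the factorization $H(P \sqcup F^k Q) = \qtbin{k+\ell}{k} H(P) \cdot F^k H(Q)$ together with the divided-power relation \eqref{binomial-convolution}. The only differences are cosmetic: you verify explicitly the factorization of $H$ that the paper records as the ``easy to check'' identity \eqref{hook-product-disconnected}, and you spell out that $P \sqcup F^k(Q)$ is again a recursively labelled forest (so Theorem~\ref{main-theorem} applies), a point the paper leaves implicit.
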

\begin{proof}
It is easy to check that the product formula $H(P)$ 
defined in \eqref{define-LHS-RHS} for a recursively labelled forest $P$ 
satisfies
\begin{equation}
\label{hook-product-disconnected}
H(P \sqcup F^k Q) =
\qtbin{k+\ell}{k} H(P) \cdot F^k H(Q)\,.
\end{equation}
Hence for recursively labelled forests $P,Q$ of sizes $k, \ell$, one has
$$
\begin{array}{rll}
\phi_{\inv}(\FFF_P \cdot \FFF_Q)
& = \phi_{\inv} \left(\FFF_{P \sqcup F^k Q} \right) 
      &\text{ by \eqref{FQSym-motivation}}\\
& = L(P \sqcup F^k Q) \cdot u^{(k+\ell)} 
      &\text{ by \eqref{phi-inv-on-posets}}\\
& = H(P \sqcup F^k Q) \cdot u^{(k+\ell)} 
      &\text{ by Theorem \ref{main-theorem}}\\
& = \qtbin{k+\ell}{k} H(P) \cdot F^k H(Q) \cdot u^{(k+\ell)} 
      &\text{ by \eqref{hook-product-disconnected}}\\
& = H(P) u^{(k)} \cdot H(Q) u^{(\ell)} 
      &\text{ by \eqref{binomial-convolution}}\\
& = L(P) u^{(k)} \cdot L(Q) u^{(\ell)}       
      &\text{ by Theorem \ref{main-theorem}}\\
& = \phi_{\inv}(\FFF_P) \cdot \phi_{\inv}(\FFF_Q) 
      &\text{ by  \eqref{phi-inv-on-posets}} \, .
\end{array}
$$
\end{proof}

\begin{remark}
\label{maj-formula-remark}
This twisted semigroup algebra $\QQ(\xx)\#\NN$ also appears implicitly
in the theory of $P$-partitions, as the target of 
a {\it different} map $\phi_{\maj}: \FQSym \rightarrow \QQ\#\NN$,
which {\it is} an algebra morphism.
This is related to a recent multivariate generalization of
Bj\"orner and Wachs' {\it other} ``maj'' $q$-hook formula
for forests.  We describe both connections briefly here.

For a poset $P$ on $\{1,2,\ldots,n\}$, a {\it $P$-partition} (see
\cite[\S 4.5 and 7.19]{Stanley}) is a 
weakly order-reversing function $f:P \rightarrow \NN$ 
(meaning $i<_Pj$ implies $f(i) \geq f(j)$) which is strictly
decreasing along {\it descent} covering relations: whenever $j$ covers $i$ 
in $P$ and $i >_\ZZ j$ 
then $f(i) > f(j)$.  Define their generating function
$
\gamma(P,\xx):=\sum_{f} \xx^f
$
where here $f$ runs over all $P$-partitions, and
$\xx^f:=x_1^{f(1)} \cdots x_n^{f(n)}.$ 
The relevant algebra morphism is defined $\QQ$-bilinearly as follows:
$$
\begin{array}{rcl}
\FQSym &\overset{\phi_{\maj}}{\longrightarrow} &\QQ\#\NN  \\
\FFF_w &\longmapsto & \gamma(w,\xx) \cdot u^n\,.
\end{array}
$$
The main proposition on $P$-partitions \cite[Theorem 4.54]{Stanley} asserts
that 
\begin{equation}
\label{main-P-partition-prop}
\gamma(P,\xx) =
\sum_{w \in \LLL(P) } \gamma(w,\xx) 
\end{equation}
or equivalently, 
$$
\phi_{\maj}(\FFF_P) = \gamma(P,\xx) u^n\,.
$$
This then shows that $\phi_{\maj}$ is an
algebra morphism, since for any posets
$P,Q$ on $[1,k]$ and $[1,\ell]$, one has
$$
\begin{aligned}
\phi_{\maj}( \FFF_P \cdot \FFF_Q)
&= \phi_{\maj}( \FFF_{P \sqcup F^k(Q)} ) \\
&= \gamma( P \sqcup F^k(Q) , \xx) u^{k+\ell}\\
&= \gamma(P,\xx) \cdot F^k(\gamma(Q,\xx)) u^{k+\ell}\\
&= \gamma(P,\xx)u^k \cdot \gamma(Q,\xx)u^\ell\\
&=\phi_{\maj}( \FFF_P) \cdot \phi_{\maj}(\FFF_Q)\,.
\end{aligned}
$$

The Bj{\"o}rner-Wachs {\it maj} formula arises
when $P$ is a {\it dual forest}, that is, 
every element $i$ in $P$ is covered by at most one
other element $j$; say that $i$ is a {\it descent} of $P$ if in addition
$i >_\ZZ j$.  Let $\Des(P)$ denote the set of descents of $P$,
and $\maj(P):=\sum_{i \in \Des(P)} i$.  In particular, permutations
$w=(w_1,\ldots,w_n)$ considered as linear orders are
dual forests, and for them one has $\maj(w)=\sum_{i: w_i > w_{i+1}} i$.
For any dual forest $P$, note that the subtree rooted at $i$ is $P_{\leq i}$,
and again denote its cardinality by $h_i$.  The Bj{\"o}rner-Wachs {\it maj} formula
asserts the following.
\vskip.1in
\noindent
{\bf Theorem.} (\cite[Theorem 1.2]{BjornerWachs})
{\it
Any dual forest $P$ on $\{1,2,\ldots,n\}$ has
\begin{equation}
\label{Bjorner-Wachs-maj-formula}
\sum_{w \in \LLL(P) } q^{\maj(w)} = q^{\maj(P)} \frac{[n]!_q}{\prod_{i=1}^n [h_i]_q}\,.
\end{equation}
}
\vskip.1in
\noindent
The following generalization was observed recently in \cite{BoussicaultFerayR}:
\vskip.1in
\noindent
{\bf Theorem.} 
{\it
For any dual forest $P$ on $\{1,2,\ldots,n\}$, one has
\begin{equation}
\label{forest-maj-product}
\gamma(P,\xx):=
\frac{\prod_{i \in \Des(P)} \xx_{P_{\leq i}}}{\prod_{i=1}^n (1-\xx_{P_{\leq i}})}
\end{equation}
where $\xx_S:=\prod_{j \in S} x_j$, so
that \eqref{main-P-partition-prop}
becomes
\begin{equation}
\label{forest-maj-identity}
\sum_{w \in \LLL(P)} 
\left( 
\frac{\prod_{i \in \Des(w)} x_{w_1} \cdots x_{w_i}}
          {\prod_{i=1}^n (1-x_{w_1} \cdots x_{w_i})}
\right)
=\frac{\prod_{i \in \Des(P)}x_{P_{\leq i}}}{\prod_{i=1}^n (1-x_{P_{\leq i}})}\,.
\end{equation}
}
\vskip.1in
\noindent
The Bj\"orner-Wachs $\maj$ formula is immediate
upon specializing $x_i=q$ in \eqref{forest-maj-identity}:
$$
\sum_{w \in \LLL(P)} \frac{q^{\maj(w)}}{(1-q)(1-q^2) \cdots(1-q^n)}
=\frac{q^{\maj(P)}}{\prod_{i=1}^n (1-q^{h_i})}\,.
$$
\end{remark}

\begin{remark}
The maps $\phi_{\inv}, \phi_{\maj}: \FQSym \rightarrow \QQ(\xx)\#\NN$
are reminiscent of the formalism of {\it moulds} discussed by Chapoton,
Hivert, Novelli and Thibon \cite{CHNT}, but we have not yet found a deeper
connection.

One might also hope that 
the $(q,t)$-specializations $\specialize_{q,t} L(P)$ 
for recursively labelled binary trees $P$ can be given a 
representation-theoretic interpretation, similar to the discussion
in Section~\ref{invariant-theory-section}, but related to
$q$-analogues of the indecomposable projective modules
for the algebras whose existence is conjectured by
Hivert, Novelli and Thibon in \cite[\S 5.2]{HivertNovelliThibon}.
At the moment this is purely speculative.
\end{remark}



\begin{thebibliography}{70}

\bibitem{BjornerWachs}
A. Bj\"orner and M.L. Wachs,
$q$-hook length formulas for forests.
{\it J. Combin. Theory Ser. A} {\bf 52} (1989), no. 2, 165--187.

\bibitem{BoussicaultFerayR}
A. Boussicault, V. Feray, and V. Reiner,
Rational function identities and valuations on cones.
In preparation, 2010.

\bibitem{CHNT}
F. Chapoton, F. Hivert, J.-C. Novelli, J.-Y. Thibon,
An operational calculus for the mould operad.
{\it Int. Math. Res. Not. IMRN} (2008), no. 9, Art. ID rnn018, 22 pp

\bibitem{HivertNovelliThibon}
F. Hivert, J.-C. Novelli, J.-Y. Thibon,
The algebra of binary search trees.
{\it Theoret. Comput. Sci.} {\bf 339} (2005), no. 1, 129--165. 

\bibitem{HivertNovelliThibon-bilinear}
F. Hivert, J.-C. Novelli, J.-Y. Thibon,
Trees, functional equations, and combinatorial Hopf algebras.  
{\it European J. Combin.}  {\bf 29}  (2008),  no. 7, 1682--1695

\bibitem{Hewett}
T.J. Hewett, 
Modular invariant theory of parabolic subgroups of ${\rm GL}\sb n(  F\sb q)$ and 
the associated Steenrod modules.  
{\it Duke Math. J.}  {\bf 82}  (1996),  no. 1, 91--102.

\bibitem{Knuth}
D.E. Knuth,
Sorting and searching, in ``The art of computer programming, Vol. 3''
Addison-Wesley, Reading MA, 1973.

\bibitem{LodayRonco}
J.-L.~Loday and M.O.~Ronco,
Hopf algebra of the planar binary trees. 
{\it Adv. Math.} {\bf 139} (1998), no. 2, 293--309. 

\bibitem{MalvenutoReutenauer}
C. Malvenuto and C. Reutenauer, 
Duality between quasi-symmetric functions and the Solomon descent algebra. 
{\it J. Algebra} {\bf 177} (1995), 967--982.

\bibitem{Stanley}
R.P. Stanley, 
Enumerative Combinatorics, Volumes 1 and 2.
{\it Cambridge Studies in Advanced Mathematics}, {\bf 49} and {\bf 62}.
Cambridge University Press, Cambridge, 1997 and 1999.

\bibitem{RStanton}
V.~Reiner and D.~Stanton,
$(q,t)$-analogues and $GL_n(\FF_q)$.
{\tt arXiv:0804.3074}, to appear in J. Algebraic. Combin.

%

\end{thebibliography}
\end{document}